\theoremstyle{plain}
\newtheorem{theorem}{Theorem}[section]
\newtheorem{proposition}[theorem]{Proposition}
\theoremstyle{definition}
\newtheorem{definition}[theorem]{Definition}
\newtheorem{remark}[theorem]{Remark}
\theoremstyle{remark}
\newcommand{\norm}[1]{\lVert#1\rVert}
\newcommand{\Bignorm}[1]{\Bigl\lVert#1\Bigr\rVert}
\newcommand{\C}{\mathbb{C}}
\newcommand{\N}{\mathbb{N}}
\newcommand{\B}{\mathcal{B}}
\newcommand{\veps}{\varepsilon}
\newcommand{\Z}{\mathcal{Z}}
\newcommand\restr[2]{{
  \left.\kern-\nulldelimiterspace 
  #1 
  \vphantom{\big|} 
  \right|_{#2} 
  }}
\DeclareMathOperator{\Ker}{Ker}
\begin{document}
\baselineskip 18pt
\title{Invariant subspace problem for rank-one perturbations: the quantitative version}

\author[A.~Tcaciuc]{Adi Tcaciuc}
\address[A. Tcaciuc]{Mathematics and Statistics Department,
   Grant MacEwan University, Edmonton, Alberta, Canada T5J
   P2P, Canada}
\email{tcaciuca@macewan.ca}

\keywords{Operator, invariant subspace, finite rank, perturbation}
\subjclass[2010]{Primary: 47A15. Secondary: 47A55}

\begin{abstract}
We show that for any bounded operator $T$ acting on infinite dimensional, complex Banach space, and for any $\varepsilon>0$,  there exists an operator $F$ of rank at most one and norm smaller than $\varepsilon$ such that $T+F$ has an invariant subspace of infinite dimension and codimension. A version of this result was proved in \cite{T19} under additional spectral conditions for $T$ or $T^*$. This solves in full generality the quantitative version of the invariant subspace problem for rank-one perturbations.
\end{abstract}
\maketitle


\section{Introduction}\label{intro}

This paper is a continuation of the work in \cite{T19}, which showed the existence of invariant subspaces for rank-one perturbations for general Banach spaces. A partial solution was given in \cite{T19} to the quantitative version of this question.  Here we solve in full generality this quantitative version as well.

The Invariant Subspace Problem, asking whether every bounded operator acting on a separable complex Banach space has a non-trivial closed invariant subspace, is one of the most famous problem in Operator Theory. It is still open for the most important case, of a separable Hilbert space. There is a vast literature on the Invariant Subspace Problem and, for brevity, we refer the readers to the  monographs by Radjavi and Rosenthal \cite{RR03} and by Chalendar and Partington \cite{CP11}, for a more comprehensive review of this topic, as well as for more recent results and approaches.

A related problem, the existence of invariant subspaces for perturbations of bounded operators has been studied for a long time, in particular in the context of separable Hilbert spaces. For example, Brown and Pearcy~\cite{BP71} showed that for any $T\in\mathcal{B}(\mathcal{H})$, where $\mathcal{H}$ is an infinite-dimensional separable Hilbert space, and for any $\varepsilon > 0$, there exists a compact operator $K$ with norm at most $\varepsilon$ such that $T+K$ has an invariant subspace of infinite dimension and codimension. As an immediate consequence of Voiculescu's ~\cite{V76} famous non-commutative Weyl-von Neumann Theorem it follows that $K$ as above can be chosen  such that $T+K$ has a \emph{reducing} subspace of infinite dimension and codimension, that is, a subspace that is invariant for both $T+K$ and its adjoint.

A new approach to the existence of invariant subspaces for finite-rank perturbations was introduced by Androulakis, Popov, Tcaciuc, and Troitsky \cite{APTT09}. It is easy to see that, given a bounded operator $T\in\mathcal{B}(X)$, where $X$ is an infinite dimensional, complex Banach space, any finite dimensional or finite-codimensional subspace is invariant under some suitable finite rank perturbation. Thus, in this context of searching for non-trivial closed invariant subspaces for a finite rank perturbations, a ''non-trivial'' subspace is a subspace of infinite dimension and codimension. Such a subspace will be henceforth called a \emph{half-space}. A half-space that is invariant for some finite rank perturbation of $T$ is called \emph{almost-invariant} for $T$ (see Section \ref{def} for more details). In \cite{APTT09} the authors showed that certain weighted shifts admit rank-one perturbations that have invariant half-spaces. For reflexive Banach spaces, Popov and Tcaciuc \cite{PT13}proved that every bounded operator admits a rank-one perturbation that has an invariant half-space. In the same paper, the authors show the existence of suitable perturbations that also have small norms, provided certain spectral conditions are satisfied; in particular this gave a substantial improvement over the aforementioned result of Brown and Pearcy. For general Banach spaces, partial solutions were given in \cite{SW14}, \cite{SW16}, and \cite{TW17}. The question was also studied for algebras of operators in \cite{P10},\cite{MPR13}, and \cite{SW16}. Please see the Introduction of \cite{T19} for a brief overview.

The question was solved for general Banach spaces in \cite{T19}, by showing that every bounded operator acting on an arbitrary (separable) complex Banach space, admits a rank-one perturbation that has an invariant-half-space. Similarly with the solution in the reflexive case in \cite{PT13}, the existence of rank-one perturbations that have small norms was also proved, provided again that certain spectral conditions hold. In this paper we solve this quantitative question in full generality,  removing these limiting spectral conditions,  and prove the following theorem:
\begin{theorem}\label{maintheorem}
Let $X$ be a complex Banach space and $T\in\B(X)$. Then for any $\veps>0$, there exists $F\in\mathcal{B}(X)$ of rank at most one and $\norm{F}\leq\veps$ such that $T+F$ has an invariant half-space.
\end{theorem}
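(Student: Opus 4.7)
The plan is to leverage the partial quantitative result of \cite{T19} and to supply a new argument covering the spectral regime excluded there. The strategy is to reduce to a structural dichotomy: either the spectral hypothesis used in \cite{T19} holds for $T$ or for $T^*$, in which case the quantitative conclusion follows directly from that paper, or it fails for both, in which case we have detailed information about the point spectrum of $T^*$. In the complementary regime we may locate a point $\la\in\partial\sigma(T)$ together with an approximate eigenvector of $T$ at $\la$ and a genuine eigenvector of $T^*$ at $\overline\la$, and it is this pairing that will drive the construction.

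Given this spectral data, we build a biorthogonal system $(e_n,e_n^*)_{n\ge 1}\subset X\times X^*$ by induction, producing at each step an $e_n$ close to an approximate eigenvector of $T$ and an $e_n^*$ close to an eigenfunctional of $T^*$, then biorthogonalizing on the fly via a Gram--Schmidt-type procedure. The inductive step must control simultaneously three quantities: the norm $\norm{e_n^*}$, the tail error $\norm{(T-\la)e_n}$, and the distance of the resulting candidate half-space from being $T$-invariant. The desired half-space has the form $Y=\overline{\Span\{e_n : n\in A\}}$ for a suitably chosen $A\subset\N$ with both $A$ and $\N\setminus A$ infinite, ensuring both infinite dimension and infinite codimension of $Y$.

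With $Y$ constructed so that it is \emph{almost} $T$-invariant with rank-one defect, meaning $T(Y)\subset Y+\Span\{y\}$ for a single vector $y$ whose norm can be made arbitrarily small by refining the approximation, the rank-one perturbation is then $F=-x^*\otimes y$, where $x^*\in X^*$ is the functional that records the coefficient of $y$ modulo $Y$ in $Tw$ for $w\in Y$. A direct verification yields $(T+F)(Y)\subset Y$, and the norm estimate $\norm{F}=\norm{x^*}\cdot\norm{y}\le\veps$ follows from the prescribed smallness of $y$, provided $\norm{x^*}$ has been kept bounded by the inductive construction.

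The principal obstacle is the biorthogonal construction in the complementary spectral regime. There, the approximate eigenvectors of $T$ and the eigenfunctionals of $T^*$ are entangled in a way that makes it delicate to ensure that the angles between $e_n$ and $e_n^*$ remain uniformly bounded below---equivalently, that $\norm{e_n^*}$ does not blow up along the induction---while at the same time keeping $\norm{(T-\la)e_n}$ summable to an arbitrarily small prescribed total. The earlier spectral hypotheses of \cite{T19} side-stepped this balancing act by providing essentially decoupled families of approximate eigenvectors and eigenfunctionals; overcoming the coupling in full generality, by a careful combination of spectral extraction and perturbation of approximate eigenvectors, is the technical heart of the proof.
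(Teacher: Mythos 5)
Your opening reduction is sound and matches the paper's first step: invoke the quantitative result of \cite{T19} (Theorem~\ref{oldmain}) unless every point of $\partial\sigma(T)=\partial\sigma(T^*)$ is an eigenvalue of both $T$ and $T^*$, and work in that complementary regime. From there on, however, what you give is a programme rather than a proof. The entire quantitative content is concentrated in the claim that one can inductively produce a biorthogonal system and a half-space $Y$ with $T(Y)\subseteq Y+\Span\{y\}$, where $\norm{y}$ can be made arbitrarily small while the coefficient functional $x^*$ stays bounded; no mechanism is offered for this, and your final paragraph concedes that overcoming the coupling is ``the technical heart of the proof'' without supplying it. Two concrete problems: first, the quantity you must control is scale-invariant --- only the product $\norm{x^*}\cdot\norm{y}$ matters, since rescaling $y$ rescales $x^*$ inversely --- so ``make $y$ small by refining the approximation'' has no content unless you exhibit why the defect functional does not blow up correspondingly. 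Second, in the reduced situation the raw material your induction asks for simply is not there: after the reductions one may assume $\sigma(T)=\{0\}$, $T$ quasinilpotent, $\dim\Ker(T)<\infty$, and (after passing to a cyclic subspace) $\dim\Ker(T^*)=1$, so there is a single spectral point, finitely many eigenvectors of $T$ and a one-dimensional space of eigenfunctionals of $T^*$; there are no families of approximate eigenvectors at distinct boundary points to decouple and biorthogonalize, which is precisely why the spectral hypothesis was needed in \cite{T19} and why its removal is the whole difficulty.

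For comparison, the paper's argument fills this gap by a different mechanism. After the reduction to a cyclic quasinilpotent $T$ it proves $\dim\Ker(T^{*\,n+1})-\dim\Ker(T^{*\,n})\le 1$ and splits into two cases: if the kernels stabilize, then $T$ restricted to $Y_n=\overline{\Range(T^n)}$ has dense range, so $0\notin\sigma_p$ of the adjoint of that restriction and Theorem~\ref{oldmain} applies to it; otherwise the span $\Z=[x_n^*]$ of a chain chosen from the kernels $\Ker(T^{*n})$ is $T^*$-invariant with $\restr{T^*}{\Z}$ of dense range, and Proposition~\ref{mainprop} applies. In that proposition the smallness of the perturbation does not come from a small defect vector at all: one uses unboundedness of the local resolvent $z\mapsto(zI-S^*)^{-1}x^{**}$ near $0$ to get $\la_n\to 0$ with $\norm{h_n^{**}}\to\infty$ for $h_n=(\la_n I-T)^{-1}x$, applies the Johnson--Rosenthal $w^*$-basic sequence theorem to obtain a biorthogonal system, and sets $F=x\otimes f^*$ with $f^*=\sum_n\norm{h_n^{**}}^{-1}y_n^*$, which is small exactly because the resolvent vectors are huge; then $(T+F)h_k=\la_k h_k$, so $[h_n]$ is a half-space spanned by genuine eigenvectors of $T+F$. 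Your proposal contains no substitute for this resolvent blow-up (or for the Case~I fallback), so as written it identifies the obstacle but does not surmount it, and it does not constitute a proof of the theorem.
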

In Section \ref{def} we establish the notations, and introduce the definitions and tools employed later in the paper. In particular, we review the relevant results in this direction and present the assumptions we can make in view of these results. Section \ref{mainsection} contains the proof of Theorem \ref{maintheorem}; for the sake of clarity we isolate a important part of this proof in a separate result, Proposition \ref{mainprop}.

\section{Definitions and Preliminaries}\label{def}

We are going to use the same definitions and notations as in \cite{T19}; we briefly review them here as well. For a Banach space $X$, we denote by $\mathcal{B}(X)$ the algebra of all bounded operators on $X$. When  $T\in{\mathcal B}(X)$, we write $\sigma(T)$, $\sigma_p(T)$, $\rho(T)$ and $\partial\sigma(T)$ for the spectrum
of~$T$,  point spectrum of $T$,  the resolvent set of~$T$ and the topological boundary of the spectrum, respectively. The closed span of a set  $\{x_n\}_n$ of vectors in $X$ is denoted by $[x_n]$. A sequence $(x_n)_{n=1}^{\infty}$ in $X$ is called a \emph{basic sequence} if any $x\in[x_n]$ can be written uniquely as $x=\sum_{n=1}^{\infty} a_n x_n$, where the convergence is in norm (see \cite[section 1.a]{LT77} for background on Schauder bases and basic sequences).

The following definition was introduced in \cite{APTT09}, providing an equivalent formulation of the existence of finite rank perturbations having invariant half-spaces.
\begin{definition}
If $T\in\mathcal{B}(X)$ and $Y$ is a subspace of $X$, we say that $Y$ is \emph{almost-invariant} for $T$ if there exists a finite dimensional subspace $E$ of $X$ such that $TY\subseteq Y+E$. The smallest dimension of such an $E$ is called the \emph{defect} of $Y$ for $T$
\end{definition}

Indeed, it is not hard to prove (see Proposition 1.3 in \cite{APTT09}) that a half-space $Y$ is almost-invariant with defect $k$ for a bounded operator $T$, if and only if there exists a rank $k$ operator $F$ such that $Y$ is invariant for $T+F$.  The new approach introduced in \cite{APTT09} essentially studies techniques of constructing these almost-invariant half-spaces, rather than directly the suitable finite-rank perturbations.

As we mentioned in the introduction, the main result in \cite{T19} is that \emph{for any separable, complex, Banach spaces, and any $T\in\mathcal{B}(X)$, there exists a bounded operator $F$ of rank at most one, such that $T+F$ has an invariant half-space.}  There are well-known examples of bounded operators that have only finite dimensional or infinite dimensional invariant subspaces (e.g. certain weighted shifts on $\mathcal{H})$. Therefore, when it comes to existence of invariant half-spaces for \emph{all} operators in $\mathcal{B}(X)$ , this result is the most one can hope for.

The following quantitative version was also proved in \cite{T19}.

\begin{theorem} \cite{T19} \label{oldmain}
Let $X$ be a separable Banach spaces, and let $T\in\B(X)$ such that $\partial\sigma(T)\setminus\sigma_p(T)\neq\emptyset$ or $\partial\sigma(T^*)\setminus\sigma_p(T^*)\neq\emptyset$. Then for any $\veps>0$, there exists $F\in\mathcal{B}(X)$ of rank at most one and $\norm{F}\leq\veps$ such that $T+F$ has an invariant half-space. If the spectral condition does not hold, we can still find a finite-rank $F$ with $\norm{F}\leq\veps$, but not necessarily rank-one.
\end{theorem}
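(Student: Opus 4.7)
The plan is to leverage the spectral hypothesis to construct a basic sequence on which $T$ acts in a nearly upper-triangular fashion with sharply controlled off-diagonal errors, and then to build an almost-invariant half-space of defect at most one. By the equivalence recorded in Proposition~1.3 of \cite{APTT09}, such a half-space corresponds to a rank-one operator $F$ for which the half-space becomes genuinely invariant under $T+F$, and the quantitative control on the errors will force $\norm{F}\le\veps$. After replacing $T$ with $T-\la I$ (which changes neither the invariant subspaces nor the relevant norms), I may assume $0\in\partial\sigma(T)\setminus\sigma_p(T)$, so that $T$ is injective and $0$ belongs to the approximate point spectrum. If instead the spectral condition only holds for $T^*$, I would run the parallel argument on $X^{*}$ and recover a half-space in $X$ by taking annihilators. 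The setup supplies unit vectors $x_n$ with $\norm{Tx_n}$ decaying at any prescribed rate.

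Starting from these approximate eigenvectors, a gliding-hump / Bessaga--Pelczynski argument produces a basic subsequence, which I still denote $(x_n)$, satisfying $\norm{Tx_n}<\veps_n$ for a suitably fast-decaying $(\veps_n)$. A further refinement arranges that each $Tx_n$ is within $\veps_n$ of the finite-dimensional initial segment $[x_1,\dots,x_{n-1}]$; injectivity of $T$ is what allows this refinement to be pushed through without degeneration. Writing $Tx_n=z_n+r_n$ with $z_n\in[x_1,\dots,x_{n-1}]$ and $\norm{r_n}<\veps_n$ prepares the inductive stage.

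The heart of the argument is the replacement of $(x_n)$ by vectors $y_n=x_n+c_n$, where the corrections $c_n$ are chosen inductively so that every $Ty_n$ lies in $[y_k:k<n]+\C u$ for a single, pre-selected vector $u$ that is fixed once and for all. If $(\veps_n)$ decreases fast enough, $(y_n)$ remains basic and equivalent to $(x_n)$, and $Y=[y_n]$ stays a half-space. By construction, $Y$ is almost-invariant for $T$ with defect at most one in the direction $u$, so Proposition~1.3 of \cite{APTT09} supplies a rank-one $F=u\otimes f$ with $(T+F)Y\subseteq Y$, and the propagation of the $\veps_n$ through the construction bounds $\norm{F}$ by $\veps$. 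I expect the principal obstacle to be the simultaneous control of basicness, of the infinite codimension of $Y$, and of the norm of $F$ through a single, globally chosen direction $u$ — a delicate calibration of the decay rates $(\veps_n)$ at each step.

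For the finite-rank statement, when the spectral condition fails for both $T$ and $T^{*}$, the construction above cannot force all images $Ty_n$ into a single one-dimensional supplement; instead I would settle for a finite-dimensional correction space, using a parallel inductive scheme with weaker selection criteria (for instance, working with truncated orbits of $T$ and cycling through a fixed finite pool of directions). This yields a half-space $Y$ that is almost-invariant for $T$ with some finite defect $k$, and hence, again through Proposition~1.3 of \cite{APTT09}, a rank-$k$ operator $F$ with $\norm{F}\le\veps$ and $(T+F)Y\subseteq Y$.
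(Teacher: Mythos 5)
First, note that the paper does not prove Theorem~\ref{oldmain} at all: it is quoted from \cite{T19}, and the technique behind it is visible in this paper only through the parallel argument of Proposition~\ref{mainprop}. Your opening reductions (translate so that $0\in\partial\sigma(T)\setminus\sigma_p(T)$, hence $T$ injective with $0$ in the approximate point spectrum; dualize if the condition holds only for $T^*$) match the actual strategy. But the heart of your argument has a genuine gap: the inductive correction $y_n=x_n+c_n$ forcing $Ty_n\in[y_k\mid k<n]+\C u$ cannot be carried out for arbitrary approximate eigenvectors. Knowing $\norm{Tx_n}<\veps_n$ tells you $Tx_n$ is close to $0$, but to land $Ty_n$ \emph{exactly} in $[y_k\mid k<n]+\C u$ you must find a small $c_n$ with $Tc_n\in -Tx_n+[y_k\mid k<n]+\C u$, i.e.\ you must place $y_n$ inside the finite-dimensional subspace $T^{-1}([y_k\mid k<n]+\C u)$ while keeping it close to $x_n$. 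Since $T$ is injective with (generally) non-dense, non-closed range, there is no reason this preimage comes anywhere near $x_n$; smallness of $Tx_n$ gives no control on $\dist(x_n, T^{-1}(\cdot))$. The same difficulty sinks the claim that the defect direction $u$ can be ``fixed once and for all'': a rank-one correction requires all the error vectors to be scalar multiples of a \emph{single} vector, not merely small.

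The proof in \cite{T19} (and in Proposition~\ref{mainprop} here, on the dual side) avoids this entirely by choosing the approximate eigenvectors to be normalized resolvents $h_n=(\la_n I-T)^{-1}x$ with $\la_n\in\rho(T)$, $\la_n\to 0$. These satisfy the \emph{exact} identity $Th_n=\la_n h_n-x$, so $[h_n]$ is almost-invariant with defect one in the fixed direction $x$ by construction --- no correction step is needed. Moreover the quantitative bound $\norm{F}\le\veps$ does not come from ``propagating the $\veps_n$'': it comes from the blow-up $\norm{(\la_n I-T)^{-1}x}\to\infty$ near the boundary point of the spectrum, which makes the coefficients $1/\norm{h_n}$ summably small in the functional $f^*=\sum_n \norm{h_n}^{-1}y_n^*$ defining $F=x\otimes f^*$. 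Two further points you gloss over: extracting a basic subsequence from the $h_n$ in a non-reflexive space is itself delicate (this is why \cite{T19} and this paper invoke the Johnson--Rosenthal Theorem~\ref{JR} rather than plain Bessaga--Pe{\l}czy\'{n}ski, which needs weak nullity), and your sketch of the finite-rank case (``cycling through a fixed finite pool of directions'') is not an argument --- in \cite{T19} that case is handled through the spectral structure (finite spectrum consisting of eigenvalues, Riesz projections), not by relaxing the triangularization.
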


In this paper we obtain rank at most one (and small norm), even when the spectral condition does not hold. Therefore, we may assume that any value in $\partial\sigma(T)=\partial\sigma(T^*)$ is an eigenvalue for both $T$ and $T^*$. Note that for any $\lambda\in\partial\sigma(T)$, $\mu\in\partial\sigma(T^*)$ with $\lambda\neq\mu$, and for corresponding eigenvectors $x$ and $x^*$ we have that $x^*(x)=0$. Hence, when $\partial\sigma(T)=\partial\sigma(T^*)$ we can construct an \emph{invariant} half-space for $T$, spanned by countably many eigenvectors for $T$ (see Proof of Theorem 2.7 in \cite{PT13}) for details.

Therefore, we can further assume that $\partial\sigma(T)=\partial\sigma(T^*)$ is finite, therefore  $\sigma(T^*)=\sigma(T)=\partial\sigma(T)$ is finite and consists only of eigenvalues. In this case we can assume without loss of generality that $\sigma(T)$ is a singleton. Indeed, one of the finitely many Riesz projections associated to each of the eigenvalues in $\sigma(T)$ must have infinite dimensional range, and this infinite dimensional range is a $T$-invariant closed infinite dimensional subspace, but not necessarily a half-space. The restriction of $T$ to this infinite dimensional subspace has singleton as its spectrum, and if we can find a suitable, small in norm, rank-one perturbation of this restriction, then clearly we can do the same for $T$ (please see the details in the proof of Theorem 3.3 in \cite{T19}). Hence we can assume $\sigma(T)=\{\lambda\}$.  Note that if the conclusion of Theorem \ref{maintheorem} holds for $T$ then it hold for any $T-\lambda I$. Therefore,  by replacing $T$ with $T-\lambda I$, we can assume that $\sigma(T)=\{0\}$, that is, $T$ is quasinilpotet.

To summarize, in order to prove the conclusion of Theorem \ref{maintheorem}, we can assume that $T$ (hence $T^*$ as well) is quasinilpotent, and $\sigma_p(T)=\sigma_p(T^*)=\{0\}$.

For the proof we are going to use the following  $w^*$-analogue of the Bessaga-Pelczynski selection principle, due to  Johnson and Rosenthal (see Theorem III.1 and Remark III.1 in \cite{JR72}). For a shorter proof see the more recent paper of Gonz\'{a}lez and Martinez-Abej\'{o}n \cite{GM12}. Recall that a sequence $(x_n)$ in a Banach space is called \emph{semi-normalized} if $0<\inf\|x_n\|\leq\sup\|x_n\|<\infty$.

\begin{theorem}\cite{JR72}\cite{GM12} \label{JR}
  If $(x^*_n)$ is a semi-normalized, $w^*$-null, sequence in a dual Banach space $X^{*}$, then there exists a basic subsequence $(y^*_n)$ of $(x^*_n)$, and a bounded sequence $(y_n)$ in $X$ such that $y^*_i(y_j)=\delta_{ij}$ for all $1\leq i,j<\infty$.
\end{theorem}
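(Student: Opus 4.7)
The plan is a simultaneous induction extracting indices $k_1<k_2<\cdots$ (so that $y_n^*:=x_{k_n}^*$) while building a triangular array $(y_j^{(n)})_{1\le j\le n}$ of vectors in $X$ with uniform norm bound $M$ and exact biorthogonality $x_{k_i}^*(y_j^{(n)})=\delta_{ij}$ for $i,j\le n$. The updates will form a Cauchy sequence in each coordinate, so $y_j:=\lim_n y_j^{(n)}$ exists in $X$ and is the desired bounded biorthogonal sequence.

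At stage $n\to n+1$ the new index $k_{n+1}$ must be chosen so that three conditions hold simultaneously: \textbf{(a)} appending $y_{n+1}^*$ preserves the Bessaga--Pelczynski basic-sequence inequality with perturbation factor $1+\delta_n$; \textbf{(b)} $|x_{k_{n+1}}^*(y_j^{(n)})|<\eta_{n+1}$ for each $j\le n$; \textbf{(c)} $\dist\bigl(x_{k_{n+1}}^*,\Span(x_{k_1}^*,\ldots,x_{k_n}^*)\bigr)\ge c>0$, which by finite-dimensional duality allows a new biorthogonal vector $y_{n+1}^{(n+1)}\in\bigcap_{i\le n}\ker x_{k_i}^*$ with $x_{k_{n+1}}^*(y_{n+1}^{(n+1)})=1$ and $\|y_{n+1}^{(n+1)}\|\le M$. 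For (a) I would cover the unit sphere of $\Span(x_{k_1}^*,\ldots,x_{k_n}^*)$ by finitely many $\delta_n$-balls centered at $u_1,\ldots,u_L$, invoke Helly's theorem to produce $z_l\in X$ with $u_l(z_l)=1$ and $\|z_l\|\le 1+\veps$, and require $|x_{k_{n+1}}^*(z_l)|<\delta_n$ for every $l$. All three conditions reduce to driving $|x_k^*(z)|$ below prescribed thresholds at a fixed finite collection of $z\in X$, and the $w^*$-nullity of $(x_k^*)$ lets us satisfy them at once by taking $k_{n+1}$ large enough. Having chosen $y_{n+1}^{(n+1)}$, I would then update
\[
y_j^{(n+1)} := y_j^{(n)} - x_{k_{n+1}}^*(y_j^{(n)})\,y_{n+1}^{(n+1)},\qquad j\le n,
\]
which restores biorthogonality with $x_{k_{n+1}}^*$, preserves the older relations, and contributes to $y_j$ a correction of norm at most $M\eta_{n+1}$.

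Taking $(\eta_n)$ and $(\delta_n)$ both summable makes the columns $(y_j^{(n)})_n$ Cauchy in $X$ and keeps the basic constant of $(y_n^*)$ bounded; the limits $y_j$ then satisfy $\|y_j\|\le M+M\sum_n\eta_n<\infty$ and $y_i^*(y_j)=\delta_{ij}$ for all $i,j$. The main technical hurdle is coordinating the three thresholds: the finite cover in (a) has cardinality $L=L(n)$ growing with the dimension, so $\delta_n$ must be tuned smaller than the smallest relevant scale at that stage, and the lower bound $c$ on the distances in (c) must be kept from deteriorating as $n$ increases. Since at each step only finitely many evaluations of $x_k^*$ at fixed vectors of $X$ are involved, however, these constraints are mutually compatible and the construction closes.
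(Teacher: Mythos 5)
The paper does not prove Theorem~\ref{JR} at all; it imports it from Johnson--Rosenthal \cite{JR72} and Gonz\'alez--Martinez-Abej\'on \cite{GM12}, and your inductive selection is essentially the argument of those sources: a Mazur-type gliding-hump extraction carried out in $X^*$ with norming vectors taken from $X$ (legitimate because $\norm{u}=\sup\{|u(z)|: z\in X,\ \norm{z}\le 1\}$), combined with a triangular biorthogonalization, so your proposal is correct and in the same spirit as the cited proofs. The one hurdle you flag, keeping the separation constant in (c) from deteriorating, is not actually an obstruction: once (a) yields a basic subsequence with basis constant $K$ and semi-normalization gives $a:=\inf_n\norm{x_n^*}>0$, the coordinate projections give $\dist\bigl(y_{n+1}^*,\Span(y_1^*,\dots,y_n^*)\bigr)\ge a/(2K)$ uniformly; alternatively, the very vectors $z_l$ that nearly norm the $\delta_n$-net of the unit sphere of $\Span(x_{k_1}^*,\dots,x_{k_n}^*)$ already certify a uniform lower bound of roughly $a/2$ as soon as $|x_{k_{n+1}}^*(z_l)|<\delta_n$ (whereas testing only against the previously built $y_j^{(n)}$ would give a bound decaying in $n$, so it is the net vectors, not those, that you should use). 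With that uniform $c$, the identification $Y^*\cong X^*/\Span(x_{k_1}^*,\dots,x_{k_n}^*)$ for $Y=\bigcap_{i\le n}\Ker x_{k_i}^*$ gives the new biorthogonal vector with norm at most $(1+\veps)/c$, and the rest of your scheme (summable $\eta_n$, Cauchy columns, passage to the limit) closes exactly as you describe.
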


\section{Rank one perturbations with small norm}\label{mainsection}

We first prove the following proposition, as a first step towards the general case.

\begin{proposition}\label{mainprop}
  Let $X$ be a separable Banach space, $T\in\mathcal{B}(X)$ a quasinilpotent operator, and $\mathcal{Z}$ an infinite dimensional, separable,   $T^*$-invariant subspace of $X^*$ such that $\restr{T^*}{\Z}$ has dense range. Then for any $\veps>0$, there exists $F\in\mathcal{B}(X)$ of rank at most one and $\norm{F}\leq\veps$ such that $T+F$ has an invariant half-space.
\end{proposition}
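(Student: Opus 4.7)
The plan is to produce biorthogonal sequences $(y_n) \subseteq X$ and $(y_n^*) \subseteq \Z$ with additional ``chain'' structure inherited from $T^*$, and then use them to build an almost-invariant half-space of defect one, i.e.\ a half-space $Y$ with $TY \subseteq Y + \Span\{e\}$ for a single vector $e \in X$. Since $T^*|_\Z$ is the restriction of the quasinilpotent $T^*$, it is itself quasinilpotent, so its dense range cannot be closed (otherwise the open mapping theorem would make it invertible, contradicting $\sigma(T^*|_\Z) = \{0\}$). Fix a rapidly decreasing $(\delta_n) \in \ell^1$ and inductively choose $z_n^* \in \Z$ with $\|z_0^*\|=1$ and $\|T^* z_{n+1}^* - z_n^*\| \le \delta_n$; iterating the chain and using $\|T^{*n}\| \to 0$ forces $\|z_n^*\| \to \infty$, and the construction can be arranged so that $\|T^* z_n^*\|/\|z_n^*\| \to 0$ as well. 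Normalizing to $\tilde z_n^* := z_n^*/\|z_n^*\|$ gives a semi-normalized sequence in $\Z$ with $\|T^* \tilde z_n^*\| \to 0$.

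Next I would extract a $w^*$-null subsequence. Since $X$ is separable the unit ball of $X^*$ is $w^*$-metrizable, so $(\tilde z_n^*)$ has a $w^*$-convergent subsequence with some limit $\xi^* \in X^*$ satisfying $T^*\xi^* = 0$. Passing to normalized consecutive differences $\omega_k^* := (\tilde z_{n_{k+1}}^* - \tilde z_{n_k}^*)/\|\tilde z_{n_{k+1}}^* - \tilde z_{n_k}^*\|$, with a further extraction chosen to keep the denominators bounded below, produces a semi-normalized, $w^*$-null sequence in $\Z$ on which $\|T^* \omega_k^*\| \to 0$ is preserved. Applying Theorem \ref{JR} to $(\omega_k^*)$ yields a basic subsequence $(y_n^*) \subseteq \Z$ and a bounded biorthogonal sequence $(y_n) \subseteq X$ with $y_i^*(y_j) = \delta_{ij}$. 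Biorthogonality then converts the norm decay of $T^* y_n^*$ into coordinate decay of $T y_n$, namely $|y_m^*(T y_n)| = |(T^* y_m^*)(y_n)| \le \|T^* y_m^*\|\|y_n\|$, which is summably small by construction.

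To promote this to a rank-one (not merely finite-rank) perturbation, the plan is to pass to a further subsequence $(y_{n_k})$ by a diagonal/compactness argument, isolating a single direction $e \in X$ and scalars $a_k$ so that $T y_{n_k} = a_k e + v_k$ with each $v_k$ lying in $[y_j]$ up to a summable error. One then defines $\phi^* := -\sum_k a_k y_{n_k}^*$, a series convergent in $\Z$ thanks to the dual basis constant of $(y_{n_k}^*)$ and the summability of $|a_k|$ built into the choice of $(\delta_n)$ at the start, and sets $F := \phi^* \otimes e$. Taking $Y := [y_{n_k}]_k$ (infinite-dimensional by construction and of infinite codimension because the $y_n^*$ with $n \notin \{n_k\}$ still annihilate $Y$), a direct check using the decomposition above gives $(T+F)Y \subseteq Y$, and a careful tuning of the error sequences gives $\|\phi^*\|\|e\| \le \veps$.

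The main obstacle is the step just described: turning the finite-rank-style control $\|T^* y_n^*\| \to 0$ into a rank-one decomposition by identifying a common direction $e$ for all the ``leakages'' $T y_{n_k}$ modulo $[y_j]$. This is the precise place where the hypothesis that $T^*|_\Z$ has dense range must enter beyond producing a $w^*$-null sequence, and it is also the step that distinguishes the rank-one assertion here from the finite-rank statement of Theorem \ref{oldmain}; hence it is the heart of the proof and will require a careful interplay of the approximate chain from the first paragraph, the basic-sequence constants coming out of Theorem \ref{JR}, and quasinilpotency of $T$.
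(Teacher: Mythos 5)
Your sketch has a genuine gap, and it sits exactly where you say the ``heart of the proof'' is: you never produce the single leakage direction $e$. The information you actually derive about $Ty_n$ is only the coordinate decay $\abs{y_m^*(Ty_n)}\le\norm{T^*y_m^*}\,\norm{y_n}$, and since the biorthogonal system $(y_m^*)$ is not total on $X$, this says nothing about the component of $Ty_n$ transverse to $[y_j]$ --- that component is completely uncontrolled, and in a general (non-reflexive) Banach space there is no compactness or diagonal argument that would force the residuals $Ty_{n_k}$ mod $[y_j]$ to line up along one vector $e$ with summable coefficients $a_k$. Without that decomposition the operator $F=\phi^*\otimes e$ cannot be defined, so the rank-one conclusion (as opposed to the finite-rank statement of Theorem \ref{oldmain}) is not reached. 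There are also two smaller unjustified steps upstream: (a) arranging $\norm{T^*z_n^*}/\norm{z_n^*}\to 0$ requires $\norm{z_{n+1}^*}/\norm{z_n^*}\to\infty$, which does not follow from $\norm{z_n^*}\to\infty$ and is not obviously compatible with keeping $T^*z_{n+1}^*$ within $\delta_n$ of $z_n^*$; and (b) the difference trick for producing a semi-normalized $w^*$-null sequence fails if $(\tilde z_n^*)$ happens to converge in norm (to a kernel vector of $T^*$, which is exactly the situation one expects here), and ``a further extraction keeping the denominators bounded below'' is then impossible.

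For contrast, the paper's proof gets the single direction for free by choosing the approximate eigenvectors as resolvent vectors of $T$ itself rather than building an approximate backward chain in $\Z$: fix $x\in X$ with $x^{**}:=\restr{j(x)}{\Z}\neq 0$, and set $h_n:=(\lambda_n I-T)^{-1}x$, so that $Th_n=\lambda_n h_n - x$ \emph{exactly}, with the same direction $x$ and coefficient $1$ for every $n$. The scalars $\lambda_n\to 0$ are chosen so that the local resolvent $g(z)=(zI-S^*)^{-1}x^{**}$ of $S^*=(\restr{T^*}{\Z})^*$ blows up, which makes $\norm{h_n^{**}}\to\infty$; this is what lets the correcting functional $f^*=\sum_n \norm{h_n^{**}}^{-1}y_n^*$ converge with norm $<\veps$, and the dense-range hypothesis enters through the injectivity of $S^*$, which forces the $w^*$-limit of the normalized $h_n^{**}$ to be zero so that Theorem \ref{JR} applies. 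If you want to salvage your approach, you would need to replace the approximate chain by a construction that controls the off-span part of $Ty_n$ exactly (not just its $y_m^*$-coordinates); the resolvent identity is the mechanism the paper uses to do precisely that.
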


\begin{proof}
Denote by $S:=T^*_{|\mathcal{Z}}:\Z\to\Z$ the restriction of $T^*$ to $\Z$,  and by $j:X\to X^{**}$ the canonical embedding of $X$ in the double dual $X^{**}$. Since $T$ is quasinilpotent, so is $T^*$, hence also $S$ and $S^*:\Z^*\to\Z^*$. Also, since $S$ has dense range, it follows that $S^*$ is injective, so $0\notin\sigma_{p}(S^*)$. Pick $x\in X$, $\norm{x}=1$, such that the restriction of $j(x)$ to $\Z$, $\restr{j(x)}{\Z}\neq 0$, and denote by $x^{**}:=\restr{j(x)}{\Z}\in\Z^*$. Consider the \emph{local resolvent} of $S^*$ at $x^{**}$,  $g:\C\setminus\{0\}\to \Z^{*}$ defined as:
$$
g(z):=(zI-S^{*})^{-1}(x^{**})=\sum_{n=0}^{\infty}\frac{S^{*n} x^{**}}{z^{n+1}}
$$

We have that $g$ is analytic on $C\setminus\{0\}$, and $z=0$ is either a pole or an essential singularity for $g$. Therefore $g$ is unbounded near $z=0$, so we can find a sequence $\lambda_n\to 0$ such that $\norm{g(\lambda_n)}\to\infty$. Denote:
$$
h_n^{**}:=(\lambda_n I-S^{*})^{-1}(x^{**})\in\Z^{*} \textrm{ and } h_n:=(\lambda_n I-T)^{-1}(x)\in X.
$$

It is routine to check that $h_n^{**}=\restr{j(h_n)}{\Z}$. Denote by $y_n^{**}:=h_n^{**}/\norm{h_n^{**}}$, and notice that
\begin{equation}\label{eq1}
  S^*y_n^{**}=\lambda_ny_n^{**}-\frac{x^{**}}{\norm{h_n^{**}}}
\end{equation}

We have from Banach-Alaoglu that $B_{\Z^*}$,  unit ball of $\Z^*$, is $w^*$-compact (in the $w^*$-topology of $Z^*$), and since $\Z$ is separable it follows that $B_{\Z^*}$ is also $w^*$-metrizable. Without loss of generality assume that $y_n^{**}\stackrel{w^*}{\longrightarrow}y^{**}\in\Z^{*}$. Taking $w^*$-limits in (\ref{eq1}), and using the fact that $\lambda_n\to 0$ and $\norm{h_n^{**}}\to\infty$, easy calculations show that $S^*y^{**}=0$. From the injectivity of $S^*$ it now follows that $y^{**}=0$.

From Theorem \ref{JR}, we can assume that $(y_n^{**})$ is a basic sequence in $\Z^{*}$, and that there exists $(y_n^{*})$ a bounded sequence in $\Z$ such that $y_i^{**}(y_j^{*})=\delta_{ij}$, for all $1\leq i, j<\infty$. Clearly both $(y_n^{**})$ and $(y_n^{*})$ are linearly independent, and note that for any $n\neq k$ we have that
\begin{equation}\label{eq2}
y_n^{*}(h_k)=j(h_k)(y_n^{*})=h_k^{**}(y_n^{*})=0
\end{equation}

and

\begin{equation}\label{eq3}
  y_k^*(h_k)=j(h_k)(y_k^*)=h_k^{**}(y_k^*)=\norm{h_k^{**}}y_k^{**}(y_k^*)=\norm{h_k^{**}}
\end{equation}

From (\ref{eq2}) it follows easily that $(h_n)\subseteq X$ are linearly independent, hence $[h_n]$ is an infinite dimensional subspace of $X$. On the other hand, for any $k\in\N$, we have that $y_{2k+1}^*([h_{2n}])=0$, and since $(y_{2k+1}^{*})$ are linearly independent it follows that $[h_{2n}]$ is an infinite dimensional subspace of $X$. Therefore, by eventually passing to a subsequence, we may assume that $[h_n]$ is a half-space.

Fix $\veps>0$ and let $M$ such that $\norm{y_n^{*}}<M$. Since $\norm{h_n^{**}}\to\infty$, by eventually passing to a subsequence we may assume that $\sum_{n=1}^{\infty}1/\norm{h_n^{**}}<\veps/M$. For any $k\in\N$ we have
$$
\Bignorm{\sum_{n=1}^{k}\frac{1}{\norm{h_n^{**}}}y_n^*}\leq\sum_{n=1}^{k}\frac{1}{\norm{h_n^{**}}}\norm{y_n^*}
\leq M\sum_{n=1}^{\infty}\frac{1}{\norm{h_n^{**}}}\leq M\cdot \frac{\veps}{M}=\veps.
$$

Therefore $f^*:=\sum_{n=1}^{\infty}\frac{1}{\norm{h_n^{**}}}y_n^*\in\Z$ is well defined and $\norm{f^*}<\veps$. Consider the rank-one operator $F:=x\otimes f^{*}\in\B(X)$. Then $\norm{F}<\veps$ and for any $k\in\N$ we have
$$
F(h_k)=f^*(h_k)x=\left(\sum_{n=1}^{\infty}\frac{1}{\norm{h_n^{**}}}y_n^*(h_k)\right)x=\frac{1}{\norm{h_k^{**}}}y_k^*(h_k)x=x
$$
and hence

$$
(T+F)h_k=Th_k+Fh_k=\lambda_k h_k-x+x=\lambda_k h_k
$$

This shows that $[h_n]$ is an invariant half-space for $T+F$.
\end{proof}

\begin{remark}
The main difficulties to overcome in Proposition \ref{mainprop} are for the situation when $X$ is not reflexive,  the proof can be simplified in the reflexive case.
\end{remark}


We are now ready to prove Theorem \ref{maintheorem}

\begin{proof}[Proof of Theorem \ref{maintheorem}]

From Theorem \ref{oldmain} and the discussion following it, we may assume that $T$ is quasinilpotent and $\sigma_p(T)=\sigma_p(T^*)=\{0\}$. We may also assume that $\Ker(T)$ is finite dimensional, as otherwise any infinite codimensional subspace of $\Ker(T)$ is an invariant half-space for $T$.

For a non-zero vector $x\in X$, denote by $\mathcal{C}_x$ the closed span of the orbit of $x$ under $T$, that is $\mathcal{C}_x:=[T^n x]_{n\geq 0}$. Clearly, when  $\mathcal{C}_x$ is finite dimensional we can find a polynomials $p_x$ such that $p_x(T)x=0$. If $\mathcal{C}_x$ is finite dimensional for all $x\in X$, then $T$ is said to be locally algebraic and it follows from Kaplansky Lemma (see Lemma 14 in \cite{K71}) that it is actually algebraic. That is, there exists a polynomial $p$ such that $p(T)=0$. In this case, since $T$ is quasinilpotent, it must be nilpotent, so $\ker(T)$ is infinite dimensional, a contradiction with our initial assumptions. Therefore, there exists a non-zero $x\in X$ such that $\mathcal{C}_x$ is infinite dimensional. If the restriction of $T$ to $\mathcal{C}_x$, $\restr{T}{\mathcal{C}_x}:\mathcal{C}_x\to\mathcal{C}_x$, satisfies the conclusion of the theorem, clearly so does $T$; therefore without loss of generality we may assume that $\mathcal{C}_x=X$.

First note that if $\mathcal{C}_x=X$ then $\dim\Ker(T^*)=0$ or $\dim\Ker(T^*)=1$. Indeed, suppose $y^*\in\ker{T^*}$. Then, for any natural number $n\geq 0$ we have that $(T^*y^*)(T^n x)=0$, therefore $y^*(T^{n}x)=0$ for any $n\geq 1$. If $x\in[T^n x]_{n\geq 1}$, then $[T^n x]_{n\geq 1}=\mathcal{C}_x=X$, and it follows that $y^*=0$. Otherwise, $[T^n x]_{n\geq 1}$ is a $1$-codimensional subspace of $X$, and from the fact that  $y^*(T^{n}x)=0$ for all $y^*\in\Ker(T^*)$ we have that $\dim\Ker(T^*)=1$.
From our initial assumption that $\sigma_p(T^*)=\{0\}$, we cannot have that $\dim\Ker(T^*)=0$, therefore we must have that $\dim\Ker(T^*)=1$.

Next, we are going to show that for any $n\in\N$, $\dim\Ker(T^{*n+1})-\dim\Ker(T^{*n})\leq 1$. Indeed, note that for any $n\in\N$ we have the short exact sequence
$$
0\longrightarrow\Ker(T^{*n})\stackrel{i}{\longrightarrow}\Ker(T^{*n+1})\stackrel{T^{*n}}{\longrightarrow} T^{*n}(\Ker(T^{*n+1}))\longrightarrow 0.
$$

Hence, also taking into account that $T^{*n}(\Ker(T^{*n+1}))\subseteq\Ker{T^*}$, it follows that $\dim\Ker(T^{*n+1})\leq\dim\Ker(T^{*n})+\dim\Ker(T^*)$. Since $\dim\Ker(T^*)=1$, we conclude that for any $n\in\N$, $\dim\Ker(T^{*n+1})-\dim\Ker(T^{*n})\leq 1$.

For any $n\in\mathbb{N}$, denote by $Y_n=\overline{Range(T^n)}$, and put $Y_0:=X$. Clearly each $Y_n$ is a closed invariant subspace of $X$, $Y_{n+1}=\overline{TY_n}$,  and $X\supseteq Y_1\supseteq Y_2\supseteq\dots.$ we can also  assume each $Y_j$ is infinite dimensional; indeed, otherwise, if $j$ is the smallest index for which $Y_j$ is finite dimensional, then any half-space of $Y_{j-1}$ containing $Y_j$ is an invariant half-space for $T$.

We consider two cases.

Case I: $\exists n\in\mathbb{N}$ such that $\Ker(T^{*n})=\Ker(T^{*n+1})$

In this situation we have that $Y_n=Y_{n+1}$; indeed, this is easy to see as for any $y^*\in X^*$ we have that $y^*\in\Ker(T^{*n})$ if and only if $y^*(Y_n)=0$. Therefore $S:=\restr{T}Y_n:Y_n \to Y_n$ has dense range, and hence  $S^*: Y_n^*\to Y_n^*$ is injective, and also quasinilpotent. From Theorem \ref{oldmain} applied to $S$ we conclude that $S$, hence also $T$, satisfies the conclusion of the Theorem, and we are done.

Case II: $\forall n\in\mathbb{N}$, $\dim\Ker(T^{*n+1})-\dim\Ker(T^{*n})=1$. In this situation we are going to show that $T$ satisfies the hypotheses of Proposition \ref{mainprop}.

Note that we have a strict inclusion $\Ker(T^*)\subset\Ker(T^{*2})\subset\Ker(T^{*3})\dots$, where for any $n\in\mathbb{N}$, $\dim\Ker(T^{*n})=n$. Pick $x_1^*\in \Ker(T)$, and for any $n\in\mathbb{N}$, $n\geq 2$,  pick $x_n^*\in \Ker(T^{*n})\setminus\Ker(T^{*n-1})$. Clearly $(x_n^*)$ are linearly independent, and put $\mathcal{Z}:=[x_n^*]\subseteq X^*$. We have that $\mathcal{Z}$ is a separable subspace of $X^*$, and to finish the proof remains to show that $\mathcal{Z}$ is $T^*$-invariant and $\restr{T^*}{\Z}$ has dense range.

We have that for any $n\in\mathbb{N}$, $\Ker(T^{*n})=[x_j^*]_{1\leq j\leq n}\subseteq\mathcal{Z}$ and also that $T^* x_n^*\in\Ker(T^{*n-1})$. Hence $T^*\mathcal{Z}\subseteq\mathcal{Z}$, so $\mathcal{Z}$ is $T^*$-invariant. To show that $\restr{T^*}{\Z}$ has dense range, we first claim  that for any $n\in\mathbb{N}$, $x_n^*\in T^*\mathcal{Z}$. Since $T^*x_2^*\in\Ker(T^*)=[x_1^*]$, it follows that $x_1^*\in T^*\mathcal{Z}$. Assume by induction that $x_j^*\in T^*\mathcal{Z}$ for any $1\leq j< n$ and we will show that $x_n^*\in T^*\mathcal{Z}$. Since $T^*x_{n+1}^*\in\Ker{T^{*n}}$, we can write
$$
T^*x_{n+1}^*=\alpha_1 x_1^*+\alpha_2 x_2^*+\dots+ \alpha_n x_n^*
$$

Since $x_1^*, x_2^*, \dots, x_{n-1}^*$, and $T^*x_{n+1}^*$ are all in $T^*\mathcal{Z}$, suffices to show that $\alpha_n\neq 0$. If $\alpha_n=0$, then we have that $T^*x_{n+1}^*\in[x_j]_{1\leq j\leq n-1}=\Ker(T^{*n-1})$, therefore $x_n^*\in\Ker(T^{*n})$. This is a contradiction with the choice of $x_n^*$, and the claim is proved.

Hence $\mathcal{Z}\subseteq T^*\mathcal{Z}\subseteq\overline{T^*\mathcal{Z}}$. Therefore $\overline{T^*\mathcal{Z}}=\mathcal{Z}$, that is, $\restr{T^*}{\Z}$ has dense range,  and we obtain the conclusion of the Theorem by applying Proposition \ref{mainprop}.

This finishes the proof.

\end{proof}

\vskip .3cm

\emph{Acknowledgments.}  This research was supported in part by the Natural Sciences and Engineering Research Council of Canada, grant number 2019-07097.

\end{document}